\newtheorem{theorem}{Theorem}
\newtheorem*{corollary}{Corollary}
\begin{document}

\title{General Parity Result and Cycle-plus-Triangles Graphs}
\author{F. Petrov}
\maketitle

\let\thefootnote\relax\footnote{
St. Petersburg Department of
V.~A.~Steklov Institute of Mathematics of
the Russian Academy of Sciences, St. Petersburg State University.
E-mail: fedyapetrov@gmail.com.
Supported by Russian Scientific Foundation grant 14-11-00581.}

\begin{abstract}  We generalize a
parity result of Fleishner and Stiebitz that being combined with 
Alon--Tarsi polynomial method allowed them to prove that a
4-regular graph formed by a Hamiltonian
cycle and several disjoint triangles is always 3-choosable.
Also we present a modification of polynomial method and show how
it gives slightly more combinatorial information about colourings 
than direct application of Alon's Combinatorial Nullstellensatz. 
\end{abstract}

We start with the following parity theorem.

\begin{theorem}
Let $V=\sqcup_{i=1}^n V_i$ be a finite set partitioned onto disjoint subsets $V_i$
of odd sizes $|V_i|$. Let $G$ be a graph on a ground set $V$ such that each $V_i$
is independent set in $G$ and each bipartite subgraph induced on $V_i\sqcup V_j$
is Eulerian (i.e. all degrees are even). Consider the subsets $U\subset V$ such that
$|U\cap V_i|=1$ for all $i$ and subgraph induced on $U$ is Eulerian. Then the number
of such $U$ is odd.
\end{theorem}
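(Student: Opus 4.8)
The plan is to work over $\mathbb{F}_2$ and turn the statement into a short additive identity. Write a transversal as $U=\{u_1,\dots,u_n\}$ with $u_i\in V_i$, and let $d_i(U)$ be the degree of $u_i$ in the induced subgraph $G[U]$. Since each $V_i$ is independent we have $d_i(U)=\sum_{j\neq i}a_{ij}(U)$, where $a_{ij}(U)\in\{0,1\}$ indicates whether $u_iu_j\in E$. The subgraph $G[U]$ is Eulerian exactly when every $d_i(U)$ is even, so the number $N$ we must prove odd satisfies
\[
N\equiv\sum_{U}\ \prod_{i=1}^n\bigl(1+d_i(U)\bigr)\pmod 2,
\]
the product being $1$ on Eulerian transversals and $0$ otherwise. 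First I would expand this product: choosing in each factor either the summand $1$ or one term $a_{i\phi(i)}(U)$ rewrites the right-hand side as a sum over pairs $(S,\phi)$ with $S\subseteq\{1,\dots,n\}$ and $\phi\colon S\to\{1,\dots,n\}$, $\phi(i)\neq i$; each pair yields the monomial $\prod_{i\in S}a_{i\phi(i)}(U)$, whose support is the graph $F_\phi=\{\{i,\phi(i)\}:i\in S\}$ on the index set.

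Next I would regroup by support. Using $a_{ij}^2=a_{ij}$ (entries are $0/1$), summing a fixed monomial over all transversals gives $g(F):=\sum_U\prod_{\{i,j\}\in F}a_{ij}(U)$, the number of transversals whose induced subgraph contains $F$; crucially $g(F)$ factors over the connected components of $F$, the isolated indices contributing the odd factors $|V_k|$. Collecting multiplicities into $c_F:=\#\{(S,\phi):F_\phi=F\}$ yields the decomposition $N\equiv\sum_F c_F\,g(F)\pmod 2$, with $F$ running over all graphs on $\{1,\dots,n\}$. The empty graph contributes $c_\emptyset g(\emptyset)=1\cdot\prod_k|V_k|\equiv 1$, so it remains to show every nonempty $F$ contributes evenly.

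The reconciliation of the two factors is the heart of the argument, and one must resist the tempting but false hope that each connected piece of $g$ is even: one can choose three pairwise-Eulerian bipartite graphs on $V_i\sqcup V_j\sqcup V_k$ admitting exactly one transversal triangle, so $g(F)$ can be odd on a triangle. The point is that $c_F$ annihilates exactly such $F$. I would compute $c_F\bmod 2$ by inclusion--exclusion over the set of uncovered edges: the number of partial vertex-to-incident-edge assignments using only edges of a subgraph $F'\subseteq F$ is $\prod_v\bigl(1+\deg_{F'}(v)\bigr)$, which modulo $2$ is the indicator that $F'$ is an even subgraph. Hence $c_F\equiv\#\{F'\subseteq F:F'\text{ even}\}=2^{b_1(F)}\pmod 2$, where $b_1(F)$ is the cycle rank; so $c_F$ is odd precisely when $F$ is a forest.

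Finally I would dispose of the forests using the Eulerian hypothesis, unused so far. It suffices that $g(C)\equiv 0$ for every tree component $C$ with at least one edge, and such a tree has a leaf $\ell$ with unique neighbour $p$. Summing over $u_\ell$ first, the only constraint involving $u_\ell$ is the edge $\{\ell,p\}$, so $\sum_{u_\ell\in V_\ell}a_{\ell p}(U)$ is the number of neighbours of $u_p$ inside $V_\ell$, which is even because the bipartite graph induced on $V_\ell\sqcup V_p$ is Eulerian. Thus every term of $g(C)$ carries an even factor, $g(C)\equiv 0$, and hence $g(F)\equiv 0$ for every nonempty forest. Combining this with $c_F\equiv 2^{b_1(F)}$, every nonempty $F$ satisfies $c_F g(F)\equiv 0$, only the empty-graph term survives, and $N$ is odd. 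The main obstacle, as indicated, is separating the $G$-independent coefficient $c_F$ from the $G$-dependent count $g(F)$, together with the realisation that non-forests are killed purely combinatorially while forests are killed by the Eulerian condition through their leaves.
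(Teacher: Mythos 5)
Your proof is correct, and although it enters and exits exactly where the paper's proof does, the decisive middle step takes a genuinely different route. Both arguments start from the same identity $N\equiv\sum_U\prod_i\bigl(1+d_i(U)\bigr)\pmod 2$ (the paper phrases the expansion as counting ``special sequences'' $(x_1,\dots,x_n,y_1,\dots,y_n)$, which are precisely your pairs $(U,(S,\phi))$), and both finish with the same move: locate a degree-one index and let the bipartite Eulerian hypothesis produce an even factor. The difference is how the cyclic structures are destroyed. The paper keeps the data directed, groups terms by the functional digraph of $\phi$, and pairs off all terms whose digraph contains a cycle of length at least $3$ by an explicit fixed-point-free involution (reversing the lexicographically minimal long cycle), which obliges it to verify that the involution creates no new cycles; what survives is then handled by the leaf argument. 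You instead forget orientations, group by the undirected support $F$, and factor the sum as $\sum_F c_F\,g(F)$, cleanly separating the $G$-independent multiplicity $c_F$ from the $G$-dependent count $g(F)$; your second inclusion--exclusion, giving $c_F\equiv\#\{F'\subseteq F: F'\text{ even}\}=2^{b_1(F)}\pmod 2$, kills every non-forest purely algebraically, with no involution to design (for a single cycle, the two objects counted by $c_F=2$ are exactly the two orientations the paper's involution swaps). Your route buys a reusable coefficient lemma and a transparent modular structure, and your observation that $g$ can be odd on a triangle makes explicit something the paper never needs to articulate: the Eulerian hypothesis is powerless against non-forests, so the multiplicity alone must eliminate them, while forests alone are eliminated by the hypothesis through their leaves. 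The paper's involution, in exchange, is more elementary --- it needs no Möbius inversion and no cycle-space dimension count --- and is explicitly bijective.
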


\begin{proof} Consider ordered sequences of vertices 
$(x_1,\dots,x_n,y_1,\dots,y_n)$ so that for all $i=1,\dots,n$:

(i) $x_i\in V_i$;

(ii) $y_i\in \{x_1,\dots,x_n\}$;

(iii) either $y_i=x_i$ or $x_i$ and $y_i$ are joined by edge in $G$. Call it a special 
sequence.

We have to prove that number of such sequences is odd. Indeed, given $x_1,\dots,x_n$
fixed the number of ways to choose $y_1,\dots,y_n$ is odd if and only if the 
subgraph on $\{x_1,\dots,x_n\}$ is Eulerian. 

For any special sequence $\lambda=(x_1,\dots,y_n)$ we construct a 
directed graph $G(\lambda)$ on $\{1,2,\dots,n\}$:
draw directed edge from $i$ to $j\ne i$ if $y_i\in V_j$. This is a directed graph with 
outdegrees at most 1. Clearly $x_1,\dots,x_n$ and graph $G$
define $y_1,\dots,y_n$ (at most) uniquely. 
Further in the proof a cycle of length at least 3 is called long.
Denote by $\mathcal{L}$ the set of special sequences $\lambda$ for which 
$G(\lambda)$ has a long cycle.
We prove that $|\mathcal{L}|$ is even by constructing 
an involution without fixed points on $\mathcal{L}$.
It acts as follows. Choose a minimal (lexicographically) long cycle in $G$
and reverse its edges.  For example, if the minimal
cycle is formed by edges $2-3$, $3-9$, $9-2$, i.e. $y_2=x_3$, $y_3=x_9$, $y_9=x_2$,
we  replace $(y_2,y_3,y_9)$ from $(x_3,x_9,x_2)$ to $(x_9,x_2,x_3)$. No new
cycles appear, 
since any edge in $G$ 
may belong to at most one cycle. Hence this map is an involution
without fixed points, as desired.

Thus the  parity
of the number of special sequences is the same as the parity of the number
of special sequences $\lambda$ 
for which graph $G(\lambda)$ does not have long cycles. 
Number of special sequences with empty
$G(\lambda)$ equals $\prod |V_i|$, i.e. is odd. 
So, it suffices to prove that if a non-empty
directed graph $G$ on $\{1,\dots,n\}$
without long cycles is fixed, the number of special
sequences $\lambda$  with $G(\lambda)=G$ is even. 
It is almost obvious. Indeed, $G$ has a vertex
which has at most 1 neighbour, say, vertex $p$ is joined only with $q$
(by 1 or 2 edges). Fix all $x_i$ for $i\ne p$. The number of ways to choose
$x_p$ is the number of neighbors of $x_q$ in $V_p$, it is even number. 
\end{proof}

\begin{corollary} Given a circle $\gamma$.
Let $P_1,\dots,P_n$ be closed polygonal lines inscribed in $\gamma$, 
each having odd number of edges, without common
vertices. Then the total number of ways to choose edges $s_I$ of $P_i$,
$i=1,\dots,n$, so that each chosen edge intersects even number of 
other chosen edges, is odd.
\end{corollary}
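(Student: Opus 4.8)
The plan is to apply Theorem~1 directly, under the following dictionary. Let $V_i$ be the set of edges (chords) of the polygon $P_i$, and let $V=\sqcup_{i=1}^n V_i$; since each $P_i$ has an odd number of edges, $|V_i|$ is odd, as required. I would define the graph $G$ on $V$ by joining two chords $e\in V_i$ and $f\in V_j$ with $i\ne j$ exactly when $e$ and $f$ cross in the interior of the disk, and never joining two chords of the same polygon. Then each $V_i$ is automatically an independent set. Under this dictionary a subset $U$ with $|U\cap V_i|=1$ for all $i$ is precisely a choice of one edge $s_i$ from each $P_i$, and the induced subgraph on $U$ is Eulerian if and only if every chosen edge crosses an even number of the other chosen edges (note that since one edge is chosen per polygon, all crossings among chosen edges are recorded by $G$). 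Thus the corollary is literally the statement that the number of such $U$ is odd, which is the conclusion of Theorem~1.

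To invoke the theorem it remains to verify its only substantial hypothesis: for every pair $i\ne j$ the bipartite graph induced on $V_i\sqcup V_j$ has all even degrees. Equivalently, I must show that a fixed chord $e$ of $P_i$ crosses an even number of edges of $P_j$ (the symmetric statement follows by exchanging the roles of $i$ and $j$). This is the geometric heart of the argument and the step I expect to demand the most care. The key observation is that the two endpoints of $e$ split $\gamma$ into two open arcs, and, because the polygons share no vertices, every vertex of $P_j$ lies strictly in one of these two arcs; colour each vertex of $P_j$ by the arc containing it. A chord $f$ of $P_j$ crosses $e$ precisely when the two endpoints of $f$ lie in different arcs, i.e. when $f$ joins vertices of opposite colour. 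Traversing the \emph{closed} polygonal line $P_j$ gives a cyclic two-colour sequence, and the number of colour-changes around a cyclic binary sequence is always even; hence the number of edges of $P_j$ crossing $e$ is even. This shows that the degree of each vertex of $V_i$ in the bipartite graph on $V_i\sqcup V_j$ is even, and by symmetry the same holds for vertices of $V_j$, so that subgraph is Eulerian.

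With all hypotheses of Theorem~1 confirmed, the conclusion that the number of admissible $U$ is odd is exactly the assertion of the corollary. The main obstacle is the even-crossing lemma above; the only degeneracy to rule out is a vertex of $P_j$ lying \emph{on} the chord $e$, which cannot happen because the polygons have no common vertices and all inscribed vertices are distinct points of $\gamma$, so the colouring by arcs is well defined.
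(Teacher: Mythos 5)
Your proposal is correct and is exactly the route the paper intends: the paper states this corollary as an immediate application of Theorem~1 (with edges of the $P_i$ as the sets $V_i$ and crossing as adjacency), leaving the verification implicit. Your even-crossing lemma --- colouring the vertices of $P_j$ by the two arcs cut out by a chord of $P_i$ and counting colour changes around the closed cyclic sequence --- is precisely the missing Eulerian hypothesis check, carried out correctly, including the degeneracy remarks.
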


The following corollary is the crucial 
parity theorem of \cite{FS}, originally proved by successive
modifications of the graph. 

\begin{corollary} Consider a 4-regular (multi)graph $G$ on the ground
set $V=\{x_1,\dots,x_{3n}\}$,
$x_{3n+1}=x_1$
(we identify vertices and abstract variables), 
which is a union of Hamiltonian cycle $x_1-x_2-\dots-x_{3n}-x_1$,
naturally considered as a regular $3n$-gon, and $n$ triangles
$a_i-b_i-c_i-a_i$, $i=1,\dots,n$. 
Consider the following Laurent polynomial 
$$
\Phi(x_1,\dots,x_{3n})=\prod_{i=1}^{3n} (1-x_{i+1}/x_i) \prod_{i=1}^n (1-a_i/b_i)(1-b_i/c_i)(1-c_i/a_i).
$$
Then the constant term $CT[\Phi]$ is congruent to 2 modulo 4.
\end{corollary}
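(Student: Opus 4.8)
The plan is to read off $CT[\Phi]$ as a signed count of balanced edge-subsets and then to reduce it, after sign-cancellations, to the count appearing in the previous Corollary. First I would expand each factor $(1-x_v/x_u)$ by selecting for every edge either the term $1$ or the monomial $-x_v/x_u$. Orienting the cycle edge $\{x_i,x_{i+1}\}$ as $x_i\to x_{i+1}$ and the triangle edges so that each triangle becomes a directed $3$-cycle, one checks that every vertex of $G$ has in-degree $2$ and out-degree $2$. A selection of the monomial on a set $S$ of oriented edges contributes $(-1)^{|S|}\prod_{e\in S}x_{\mathrm{head}(e)}/x_{\mathrm{tail}(e)}$, and the exponent of each variable cancels exactly when $S$ is \emph{balanced}, i.e. $\mathrm{indeg}_S(v)=\mathrm{outdeg}_S(v)$ for every $v$. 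Thus
\[
CT[\Phi]=\sum_{S\ \mathrm{balanced}}(-1)^{|S|}.
\]

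Next I would call a triangle \emph{passive} for $S$ if $S$ contains none or all of its three arcs (zero imbalance at each of its vertices) and \emph{active} otherwise. Toggling the least-indexed passive triangle between ``empty'' and ``full'' is a fixed-point-free, sign-reversing involution on the balanced subsets that have a passive triangle (it changes $|S|$ by $3$ and preserves balance), so those terms cancel and only \emph{all-active} $S$ survive. A short case check shows that an active triangle meets $S$ in a directed path of length $1$ or $2$; such a path has a head (imbalance $+1$) and a tail (imbalance $-1$), its third vertex being balanced, and $\{\mathrm{head},\mathrm{tail}\}$ is a genuine edge (chord) of the triangle. On the Hamiltonian cycle a balanced $S$ is a union of arc-intervals, each injecting imbalance $-1$ where it switches on and $+1$ where it switches off; matching these against the triangle imbalances forces the heads to be the switch-on vertices and the tails the switch-off vertices, after which the selected cycle-arcs are uniquely reconstructed. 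Hence an all-active $S$ is the same datum as a choice of one \emph{oriented} chord in each triangle.

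The only remaining constraint is global consistency: reading the $2n$ chord-endpoints in their circular order on $\gamma$, the switch-on and switch-off vertices must alternate. Viewing each chosen chord as an edge of an inscribed triangle, I would prove the key parity lemma: the alternating $2$-colouring of the $2n$ endpoints (unique up to global swap) assigns different colours to the two ends of a chord $c$ if and only if the number of endpoints strictly interior to one side is even, i.e. if and only if $c$ crosses an \emph{even} number of the other chosen chords. Consequently an all-active balanced $S$ with a prescribed chord-selection exists precisely when each chosen chord crosses evenly many others, and then there are exactly two such $S$, given by the two alternating colourings. These two differ by reversing every orientation, which complements $S$ within the $6n$ arcs and replaces $|S|$ by $6n-|S|$; since $6n$ is even, the two have equal sign. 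Therefore
\[
CT[\Phi]=2\sum_{\text{good selections}}\varepsilon,\qquad \varepsilon=\pm1,
\]
where a selection is \emph{good} when each chosen chord crosses an even number of the others, and this sum is congruent modulo $2$ to the number $M$ of good selections.

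Finally the $n$ triangles are disjoint closed polygonal lines inscribed in $\gamma$, each with an odd number ($3$) of edges, so the previous Corollary applies verbatim and yields that $M$ is odd. Then $\sum\varepsilon$ is odd and $CT[\Phi]=2\cdot(\text{odd})\equiv 2\pmod 4$. The main obstacle I anticipate is the parity lemma together with the bookkeeping that collapses the balanced-subset sum into exactly two equal-sign terms per good chord-selection; the delicate point is getting the correct parity in ``even number of interior endpoints $\Leftrightarrow$ even number of crossings'', after which the reduction to the Corollary is immediate and the signs $\varepsilon$ need not even be determined.
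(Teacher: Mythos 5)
Your proof is correct and follows essentially the same route as the paper: expand all $6n$ brackets, reduce to a choice of one oriented chord per triangle whose heads and tails must alternate on the circle (equivalently, each chosen chord crosses an even number of the others), pair the two consistent orientations into contributions of $\pm 2$, and invoke the preceding Corollary to conclude that the number of good chord selections is odd. Your passive/active-triangle involution and the balanced-subset bookkeeping are just more explicit renderings of the paper's algebraic cancellations (e.g. $(1-a/b)(1-b/c)(1-c/a)=a/c+c/b+b/a-c/a-b/c-a/b$ and the pairing of complementary selections in all $6n$ brackets), so the two arguments coincide in substance.
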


\begin{proof} 
Start with expanding brackets in $\prod_{i=1}^{3n} (1-x_{i+1}/x_i)$.
We get monomials with each variable in a power 0 or $\pm 1$, and
powers $+1$ and $-1$ alternate. Coefficient of each such
monomial is $\pm 1$.  

Now consider triangles. We have $(1-a/b)(1-b/c)(1-c/a)=a/c+c/b+b/a-c/a-b/c-a/b$. 
That is, for any triangle we should take one vertex in power 1, another 
in power -1, third in power 0. Draw an arrow $a\rightarrow b$ if we choose 
$-a/b$, and so on. Additionally colour  $a$ in black and $b$ in white.
So, we have one black-to-white arrow for each triangle. 
Product of corresponding
multiples may cancel (i.e. give a constant product)
with the unique multiple  arising from $\prod_{i=1}^{3n} (1-x_{i+1}/x_i)$.
This happens if only if
black and white vertices alternate.
This in turn happens exactly when each chosen arrow intersects even
number of other chosen arrows. And for given $n$ not-oriented edges
there exist exactly two ways to draw arrows on them in such a way that
black and white vertices alternate. These two ways give a total amount +2
or -2 to the constant term of $\Phi$ (since they are obtained
one from another by changing summand in all $6n$ brackets).
Now we just use the previous corollary and
conclude that half of the constant term of $\Phi$ is odd.
\end{proof}

Now we write down the formula for the
central coefficient of $\Phi $ via the values of $\Phi$ on a grid.
Choose sets $A_1,\dots,A_{3n}$ of cardinality 3 in $K\setminus \{0\}$ for
some field
$K$ (we use only $K=\mathbb{R}$, but it is possible that other fields may be  useful
for other goals.)  Define a function
$\varphi_i$ on the set $A_i$. If, say, $A_i=\{u,v,w\}$ we put
$\varphi_i(u)=\frac{vw}{(u-v)(u-w)}$ and so on. Then 
\begin{equation}\label{auxiliary}
\sum_{x\in A_i} \varphi_i(x) x^d=\begin{cases}1 &\mbox{if } d=0 \\ 
0 & \mbox{if } d=1 \mbox{ or } d=2.\end{cases}
\end{equation}
The formula for the constant term is
\begin{equation}\label{main}
CT[\Phi]=\sum_{x_i\in A_i} \Phi(x_1,\dots,x_{3n}) \cdot
\prod_{i=1}^{3n}\varphi_i(x_i).
\end{equation}
Indeed, expand $\Phi(x_1,\dots,x_{3n})$. For each monomial term $\prod x_i^{d_i}$ 
in $\Phi$ the sum of products is a product of sums:
$$
\sum_{x_i\in A_i} \prod_{i=1}^{3n} x_i^{d_i}\cdot \prod_{i=1}^{3n} \varphi_i (x_i)=\prod_{i=1}^{3n} \left( \sum_{x\in A_i} \varphi_i(x) x^{d_i}\right)
$$
Equation \eqref{auxiliary} yields that this product equals 1 if
$d_1=\dots =d_{3n}=0$ (for the constant term) and equals 0 if $d_i\in \{1,2\}$ for at least one index $i$ (it happens
for each non-constant term of $\Phi$, that is seen from the formula for $\Phi$). This gives \eqref{main}.

Immediate corollary of \eqref{main} is that $\Phi$ can not vanish on $\prod A_i$. In other words, graph
$G$ is 3-choosable.

Now consider genuine colourings of $G$. Assume that $G$ is properly 3-coloured,
we identify colours with three real numbers $u,v,w$. Clearly vertices of any triangle
have different colours, therefore there are $n$ vertices of each colour. Denote
by $U$, $V$, $W$ number of $vw$-edges, $uw$-edges, $uv$-edges respectively
in the Hamiltonian cycle. Then $U+V$ is twice more than the number of $w$-vertices, i.e.
$U+V=2n$, analogously $V+W=U+W=2n$, hence $U=V=W=n$. 

Now
we take $A_1=A_2=\dots A_{3n}=A=\{u,v,w\}$ and apply formula \eqref{main}. Consider non-zero summand
in RHS of \eqref{main}, it corresponds to some  proper colouring. We have
\begin{align*}
\prod_{i=1}^{3n} \varphi_i(x_i)=&(-1)^{3n}\frac{(uvw)^{2n}}{(u-v)^{2n}(v-w)^{2n}(w-u)^{2n}}\\
\prod_{i=1}^n (1-a_i/b_i)(1-b_i/c_i)(1-c_i/a_i)=&\pm \frac{(u-v)^{n}(v-w)^{n}(w-u)^{n}}{(uvw)^{n}}\\
\prod_{i=1}^{3n} (1-x_{i+1}/x_i)&=\pm \frac{(u-v)^{n}(v-w)^{n}(w-u)^{n}}{(uvw)^{n}}.
\end{align*}
Totally 
$$
\Phi(x_1\dots,x_{3n}) \prod_{i=1}^{3n} \varphi_i(x_i)=\pm 1.
$$
Let's see what happens if we rename colours. 
If we simultaneously replace, say, colours $u$ and $v$, we totally change sign of
$2n$ or $6n$ multiples, hence sign of the product does not change. Therefore we may partition all non-zero summands
in the RHS of \eqref{main} onto 6-tuples with the same value of summands, and the number of 6-tuples equals the number 
of essentially different 3-colourings of $G$ (permutation of colours gives the same colouring). Hence we have proved 

\begin{theorem} Number of essentially different 3-colourings of the cycle-plus-triangles graph $G$ is odd.
\end{theorem}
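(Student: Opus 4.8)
The plan is to evaluate the already-established identity \eqref{main} in the special case $A_1=\cdots=A_{3n}=\{u,v,w\}$ and to read off the parity of the colouring count from the congruence $CT[\Phi]\equiv 2\pmod 4$ supplied by the crucial Corollary. First I would note that a summand on the right-hand side of \eqref{main} is non-zero precisely when $(x_1,\dots,x_{3n})$ assigns distinct colours to adjacent vertices, i.e. when it is a proper $3$-colouring: a repeated colour on a Hamiltonian edge kills the factor $(1-x_{i+1}/x_i)$, and a repeated colour inside a triangle kills the corresponding triangle factor. Thus the non-zero summands are in bijection with proper $3$-colourings of $G$, and \eqref{main} becomes a signed count over such colourings.

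Next I would record the value of a non-zero summand. Using the balance $U=V=W=n$ (so that the three displayed products are the indicated monomials in $u,v,w$), a direct computation shows that each non-zero summand equals $\pm 1$. The essential structural fact is the behaviour under renaming colours: transposing two of $u,v,w$ flips the sign of an even number of factors ($2n$ or $6n$ of them), so the value $\pm 1$ is \emph{constant} along each orbit of the $S_3$-action permuting colours. Since every proper $3$-colouring uses all three colours (with $n$ vertices of each), its stabiliser in $S_3$ is trivial, and all orbits have size exactly $6$.

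I would then assemble the pieces. Grouping the non-zero summands into $S_3$-orbits, write $N=p+q$ for the number of essentially different colourings, where $p$ (resp. $q$) counts orbits whose common summand is $+1$ (resp. $-1$). Each orbit contributes $\pm 6$, so $CT[\Phi]=6(p-q)$. Reducing modulo $4$ and invoking $CT[\Phi]\equiv 2\pmod 4$ gives $2(p-q)\equiv 2\pmod 4$, i.e. $p-q$ is odd; since $p+q$ and $p-q$ always have the same parity, $N=p+q$ is odd, which is the claim.

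The main obstacle does not lie in this final assembly but in the two ingredients it rests on, both already prepared earlier in the text: the congruence $CT[\Phi]\equiv 2\pmod 4$, which is the delicate point where the parity theorem and its two corollaries do the real work, and the exact evaluation of each summand as $\pm 1$ together with its sign-invariance under colour permutations. Given those, the elementary arithmetic observation $6\equiv 2\pmod 4$ is exactly what converts ``$CT[\Phi]$ is twice an odd number'' into ``$N$ is odd''.
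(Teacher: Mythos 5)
Your proposal is correct and follows essentially the same route as the paper: evaluating \eqref{main} on the grid $A_i=\{u,v,w\}$, noting each non-zero summand is $\pm 1$ with sign invariant under colour permutations, grouping summands into orbits of size $6$, and extracting the parity from $CT[\Phi]\equiv 2\pmod 4$. The only difference is that you spell out explicitly the final arithmetic ($6(p-q)\equiv 2\pmod 4$ forces $p+q$ odd) and the triviality of stabilisers, both of which the paper leaves implicit.
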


\begin{corollary}
There exists a proper colouring of $G$ in 3 colours white, blue and red such that blue and red vertices form
a connected graph.
\end{corollary}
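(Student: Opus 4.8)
The plan is to leverage the parity result just established: we know the number of essentially different 3-colourings of $G$ is odd, and we want to extract from this set at least one colouring with a connectivity property. The key idea is to define an involution on the set of ``bad'' colourings (those where the blue-red subgraph is disconnected) and show it has no fixed points, so that the number of bad colourings is even; since the total is odd, at least one good colouring must exist.

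First I would make precise what the blue-red subgraph looks like. Fix a proper 3-colouring with colours white, blue, red. Consider the subgraph $H$ of $G$ induced on the blue and red vertices. Since each triangle $a_i-b_i-c_i$ uses all three colours, exactly one vertex of each triangle is white, so the triangles contribute to $H$ only single edges between a blue and a red vertex. The cycle edges contribute the arcs between consecutive non-white vertices. So $H$ is essentially built from a ``skeleton'' along the Hamiltonian cycle together with $n$ blue-red triangle edges. The combinatorial structure to understand is when such $H$ fails to be connected.

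The main step is to construct the involution on disconnected colourings. The natural candidate is a colour swap applied locally to one connected component of $H$: if $H$ is disconnected, select its connected component that is minimal in some canonical ordering (for instance, the component containing the non-white vertex of smallest index along the cycle), and swap the roles of blue and red \emph{within that component only}. Because the component is a union of full blue-red adjacencies and is separated from the rest of $H$ by white vertices, swapping blue$\leftrightarrow$red inside it keeps the colouring proper and does not change which subgraph is $H$ (it is the same vertex set, hence the same components), so disconnectedness is preserved and the selected minimal component is unchanged. Applying the swap twice returns the original colouring, giving an involution; and since a colour swap on a nonempty vertex set genuinely changes the colouring, there are no fixed points.

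The hard part will be verifying that this local swap indeed produces a genuinely different \emph{essential} colouring and that the canonical-component selection is invariant under the swap, so that the involution is well defined on the quotient by global colour permutations. One must check that swapping blue and red inside one component of a disconnected $H$ is not undone by a global permutation of colours — this is where disconnectedness is essential, since a global swap would also affect the other components. Care is also needed to ensure the ``minimal component'' is chosen by a rule that does not depend on the blue/red labels themselves (e.g.\ using the cycle-index of vertices rather than their colours), so that the two colourings paired by the involution select the same component. Once these invariance checks are settled, the parity argument concludes: the bad colourings pair off evenly, the odd total forces a good colouring to remain, and that colouring has connected blue-red part, as desired.
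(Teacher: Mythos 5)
The swap-one-component idea is sound, and it is in fact the same mechanism the paper uses; but your parity bookkeeping has a genuine gap: the set you count and the set your involution acts on do not match. Theorem 2 gives oddness for colourings modulo \emph{all six} permutations of the colours, and ``the blue--red subgraph is disconnected'' is simply not a property of such an equivalence class: it depends on which colour class plays the role of white, and one and the same partition can have a disconnected complement for one choice of white class and a connected one for another. For the same reason the involution itself does not descend to the quotient: to locate ``the non-white vertex of smallest index'' you must already know which class is white, and on the $S_3$-quotient there is no invariant way to know this. So the difficulty is not the one you flag (whether the local swap could be ``undone'' by a global permutation); it is that neither the notion of ``bad'' nor the swap is defined on the set whose cardinality is odd, and the conclusion ``bad colourings pair off evenly, the odd total forces a good colouring'' compares sets living in different spaces.

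The gap is reparable while keeping your plan. Option (a): anchor white canonically --- call a partition bad when the subgraph induced on the union of the two classes \emph{not containing} $x_1$ is disconnected; then your swap, performed in the component of that subgraph containing the least-index vertex, is well defined on essential colourings, maps bad to bad (the class of $x_1$, hence the subgraph, is unchanged), squares to the identity, and is fixed-point free exactly because of disconnectedness; oddness of the total then even gives the stronger conclusion that some colouring works with $x_1$ white. Option (b): count colourings with a distinguished white class but unordered blue/red; there are $3\times(\text{odd})$, still odd, and your involution acts there. The paper instead counts fully labelled colourings: assuming no good colouring exists, the colourings with a fixed white set come in bunches of $2^r$ with $r\ge 2$ (independent blue/red swaps in \emph{each} component), so the total would be divisible by $4$, contradicting that it equals $6\times(\text{odd})\equiv 2 \pmod 4$. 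That is your swap mechanism applied to all components at once rather than to a canonical one, which is why the paper needs no canonical choices and no well-definedness checks at all.
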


\begin{proof}
Assume the contrary. Then for given $n$ white vertices, other $2n$ vertices have $r\ge 2$ connected components.
We may interchange blue and red colour in each component by all possible $2^r$  ways. Therefore
total number of white-blue-red colourings is divisible by 4, on the other hand, it is 6 times more than the odd
number of essentially different colourings. The contradiction.
\end{proof}

Above modification of polynomial method is essentially the same as proposed in \cite{L,KP},
the only difference is that we apply to it Laurent polynomials directly, without making polynomials
from them. 
In \cite{FS} the authors used the method of \cite{AT}, which was later explained as application 
of Combinatorial Nullstellensatz  in the main 
survey by Alon \cite{Alon}. 

I am grateful to Vladislav Volkov for fruitful discussions.

\end{document}